\numberwithin{equation}{section}
\numberwithin{figure}{section}
\theoremstyle{plain}
\newtheorem{thm}{\protect\theoremname}[section]
\theoremstyle{definition}
\newtheorem{defn}[thm]{\protect\definitionname}
\theoremstyle{plain}
\newtheorem{cor}[thm]{\protect\corollaryname}
\theoremstyle{plain}
\newtheorem{prop}[thm]{\protect\propositionname}
\theoremstyle{plain}
\newtheorem{example}[thm]{\protect\examplename}
\providecommand{\corollaryname}{Corollary}
\providecommand{\definitionname}{Definition}
\providecommand{\propositionname}{Proposition}
\providecommand{\theoremname}{Theorem}
\providecommand{\examplename}{Example}
\begin{document}
\global\long\def\F{\mathbf{F} }%
\global\long\def\Aut{\mathrm{Aut}}%
\global\long\def\C{\mathbb{C}}%
\global\long\def\H{\mathcal{H}}%
\global\long\def\U{\mathcal{U}}%
\global\long\def\P{\mathcal{P}}%
\global\long\def\ext{\mathrm{ext}}%
\global\long\def\hull{\mathrm{hull}}%
\global\long\def\triv{\mathrm{triv}}%
\global\long\def\Hom{\mathrm{Hom}}%

\global\long\def\trace{\mathrm{tr}}%
\global\long\def\End{\mathrm{End}}%

\global\long\def\L{\mathcal{L}}%
\global\long\def\W{\mathcal{W}}%
\global\long\def\E{\mathbb{E}}%
\global\long\def\SL{\mathrm{SL}}%
\global\long\def\R{\mathbb{R}}%
\global\long\def\Z{\mathbf{Z}}%
\global\long\def\rs{\to}%
\global\long\def\A{\mathcal{A}}%
\global\long\def\a{\mathbf{a}}%
\global\long\def\rsa{\rightsquigarrow}%
\global\long\def\D{\mathbf{D}}%
\global\long\def\b{\mathbf{b}}%
\global\long\def\df{\mathrm{def}}%
\global\long\def\eqdf{\stackrel{\df}{=}}%
\global\long\def\ZZ{\mathcal{Z}}%
\global\long\def\Tr{\mathrm{Tr}}%
\global\long\def\N{\mathbb{N}}%
\global\long\def\std{\mathrm{std}}%
\global\long\def\HS{\mathrm{H.S.}}%
\global\long\def\e{\varepsilon}%
\global\long\def\c{\mathbf{c}}%
\global\long\def\d{\mathbf{d}}%
\global\long\def\AA{\mathbf{A}}%
\global\long\def\BB{\mathbf{B}}%
\global\long\def\u{\mathbf{u}}%
\global\long\def\v{\mathbf{v}}%
\global\long\def\spec{\mathrm{spec}}%
\global\long\def\Ind{\mathrm{Ind}}%
\global\long\def\half{\frac{1}{2}}%
\global\long\def\Re{\mathrm{Re}}%
\global\long\def\Im{\mathrm{Im}}%
\global\long\def\p{\mathfrak{p}}%
\global\long\def\j{\mathbf{j}}%
\global\long\def\uB{\underline{B}}%
\global\long\def\tr{\mathrm{tr}}%
\global\long\def\rank{\mathrm{rank}}%
\global\long\def\K{\mathbf{K}}%
\global\long\def\hh{\mathcal{H}}%
\global\long\def\h{\mathfrak{h}}%

\global\long\def\EE{\mathcal{E}}%
\global\long\def\PSL{\mathrm{PSL}}%
\global\long\def\G{\mathcal{G}}%
\global\long\def\Int{\mathrm{Int}}%
\global\long\def\acc{\mathrm{acc}}%
\global\long\def\awl{\mathsf{awl}}%
\global\long\def\even{\mathrm{even}}%
\global\long\def\z{\mathbf{z}}%
\global\long\def\id{\mathrm{id}}%
\global\long\def\CC{\mathcal{C}}%
\global\long\def\cusp{\mathrm{cusp}}%
\global\long\def\new{\mathrm{new}}%

\global\long\def\LL{\mathbb{L}}%
\global\long\def\M{\mathbf{M}}%
\global\long\def\I{\mathcal{I}}%
\global\long\def\X{X}%
\global\long\def\free{\mathbf{F}}%
\global\long\def\into{\hookrightarrow}%
\global\long\def\Ext{\mathrm{Ext}}%
\global\long\def\B{\mathcal{B}}%
\global\long\def\Id{\mathrm{Id}}%
\global\long\def\Q{\mathbb{Q}}%

\global\long\def\O{\mathcal{T}}%
\global\long\def\Mat{\mathrm{Mat}}%
\global\long\def\NN{\mathrm{NN}}%
\global\long\def\nn{\mathfrak{nn}}%
\global\long\def\Tr{\mathrm{Tr}}%
\global\long\def\SGRM{\mathsf{SGRM}}%
\global\long\def\m{\mathbf{m}}%
\global\long\def\n{\mathbf{n}}%
\global\long\def\k{\mathbf{k}}%
\global\long\def\GRM{\mathsf{GRM}}%
\global\long\def\vac{\mathrm{vac}}%
\global\long\def\SS{\mathcal{S}}%
\global\long\def\red{\mathrm{red}}%
\global\long\def\V{V}%
\global\long\def\SO{\mathrm{SO}}%
\global\long\def\Gd{\Gamma^{\vee}}%
\global\long\def\fd{\mathrm{fd}}%
\global\long\def\perm{\mathrm{perm}}%
\global\long\def\tos{\xrightarrow{\mathrm{strong}}}%

\vspace{-3in} 
\title{$\SL_{4}(\Z)$ is not purely matricial field}
\author{Michael Magee and Mikael de la Salle}
\maketitle
\begin{abstract}
We prove that every finite dimensional unitary representation of $\SL_{4}(\Z)$
contains a non-zero $\SL_{2}(\Z)$-invariant vector. As a consequence,
there is no sequence of finite-dimensional representations of $\SL_{4}(\Z)$
that gives rise to an embedding of its reduced $C^{*}$-algebra into
an ultraproduct of matrix algebras.
\end{abstract}

\section{Statement of results}

We view $\SL_{2}(\Z)$ as the subgroup of $\SL_{4}(\Z)$ consisting
of matrices of the form $\left(\begin{array}{cccc}
* & * & 0 & 0\\
* & * & 0 & 0\\
0 & 0 & 1 & 0\\
0 & 0 & 0 & 1
\end{array}\right)$. The point of this note is to prove the following theorem.
\begin{thm}
\label{thm:main}Every finite dimensional unitary representation of
$\SL_{4}(\Z)$ contains a non-zero $\SL_{2}(\Z)$-invariant vector.
\end{thm}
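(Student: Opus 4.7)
My plan is to reduce the theorem to a finite-group statement via rigidity of higher-rank arithmetic lattices, and then attack that statement using the $(2,2)$-parabolic structure of $\SL_{4}$ together with its commuting pair of $\SL_{2}$ subgroups.

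The first step is to observe that every finite-dimensional unitary representation $\pi\colon \SL_{4}(\Z)\to U(V)$ factors through a finite congruence quotient $\SL_{4}(\Z/N\Z)$. This is a standard consequence of Margulis super-rigidity applied to the compact closure of $\pi(\SL_{4}(\Z))$: any resulting virtual continuous extension $\SL_{4}(\mathbf{R})\to U(V)$ must be trivial since $\SL_{4}(\mathbf{R})$ has no non-trivial compact algebraic quotient, so $\pi$ kills a finite-index subgroup. By the congruence subgroup property of Bass--Milnor--Serre this subgroup contains a principal congruence subgroup $\Gamma(N)$, hence $\pi$ factors through $\SL_{4}(\Z/N\Z)$. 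By the Chinese Remainder Theorem it suffices to handle $N=p^{k}$ a prime power, and the problem becomes: every irreducible complex representation of $\SL_{4}(\Z/p^{k}\Z)$ contains a non-zero vector fixed by the image of the upper-left $\SL_{2}(\Z/p^{k}\Z)$.

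For this finite statement I would use the parabolic $P$ of type $(2,2)$ in $\SL_{4}(\Z/p^{k}\Z)$, with Levi $M\subset \mathrm{GL}_{2}\times\mathrm{GL}_{2}$ and abelian unipotent radical $U\cong \Mat_{2}(\Z/p^{k}\Z)$. The upper-left $\SL_{2}$, call it $H$, and the commuting lower-right copy $H'$ both sit inside $M$ and are conjugate in $\SL_{4}$ by a Weyl element of order four. For an irreducible $\pi$ I would decompose $\pi|_{U}$ into characters, partition them into $M$-orbits on $\widehat{U}$ (naturally stratified by the rank of the representing matrix in $\Mat_{2}(\Z/p^{k}\Z)$), and then use Mackey's formula together with the $H\leftrightarrow H'$ conjugation symmetry to produce $H$-invariant vectors orbit by orbit.

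The main obstacle is the full-rank stratum: there the $M$-stabilizer of a character essentially collapses to a diagonal copy of $\SL_{2}$, and Mackey's formula no longer yields $H$-invariants for free. I expect the resolution to exploit the fact that any full-rank matrix in $\Mat_{2}$ can be trivialized purely by left-multiplication, combined with the $H\leftrightarrow H'$ conjugation, to reduce the question to one about $H$-invariants inside a representation of a diagonal $\SL_{2}$. Small primes (especially $p=2,3$) and deeper levels ($k\ge 2$) will probably require an induction on $k$ that lifts characters from the mod-$p$ level, together with ad-hoc treatment of the small cases.
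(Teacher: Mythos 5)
Your first reduction --- superrigidity plus the congruence subgroup property to factor $\pi$ through $\SL_{4}(\Z/N\Z)$, then the Chinese Remainder Theorem to reduce to $N=p^{k}$ --- is exactly the paper's first step and is fine. The finite-group part, however, is only a plan, and the hard case in that plan is misdiagnosed. The stratum you single out as the main obstacle, the full-rank characters of $U\cong\Mat_{2}(\Z/p^{k}\Z)$, is actually the easy one: for invertible $B$ the stabilizer in the Levi of the character $\chi_{B}:X\mapsto e^{2\pi i\,\mathrm{tr}(BX)/p^{k}}$ is the twisted diagonal $\{(A,BAB^{-1})\}$, which meets $H=\SL_{2}\times\{I\}$ trivially; Mackey then makes the corresponding isotypic block a multiple of the regular representation of $H$, which contains the trivial representation for free. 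The genuine difficulties sit at the other end of the stratification and are not addressed. On the rank-zero stratum $V^{U}$, Mackey gives nothing: $V^{U}$ is just some representation of the Levi $M\supseteq H\times H'$, and a representation of $\SL_{2}\times\SL_{2}$ of the form $\sigma\otimes\triv$ with $\sigma$ nontrivial has no $H$-invariant vectors, so an orbit-by-orbit argument cannot close this case without further global input from the $\SL_{4}$-action. On the rank-one strata the intersections $H\cap g\,\mathrm{Stab}\,g^{-1}$ are root or Borel-type subgroups of $H$, so you get inductions of characters of those subgroups and must still prove that a trivial one occurs. Finally, for $k\geq2$ you need to know that a character of $U$ that is primitive (nontrivial mod $p^{k-1}$) appears at all in $\phi|_{U}$; this is exactly where ``newness'' must be used, and your ``induction on $k$ that lifts characters'' is not an argument. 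Phrases such as ``I expect the resolution to exploit'' and ``will probably require'' mark these as open, so as it stands the theorem is not proved.

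For comparison, the paper takes a different and completely explicit route through the $(3,1)$-parabolic rather than the $(2,2)$-parabolic. It first shows, using newness and a short commutator computation, that no root subgroup $\langle I+p^{r-1}\e_{ij}\rangle$ lies in $\ker\phi$; this forces a primitive character $\chi$ on the last-column unipotent group $U_{1}\cong(\Z/p^{r}\Z)^{3}$, normalized to $\xi=(0,0,1)$ by $\SL_{3}$-conjugation. A Heisenberg subgroup acting on $V_{\chi}$ then yields a vector on which $[x;y;z]$ acts by $e^{2\pi iy/p^{r}}$, and a counting argument identifies the representation of the middle $\SL_{2}$ generated by that vector with $\Ind_{N}^{G_{2}}\triv$, which contains the trivial representation by Frobenius reciprocity. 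If you want to pursue your route, the place to concentrate is the low-rank strata, and you will likely need an analogue of the paper's Step 1 together with an argument that moves between parabolics.
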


We now explain some consequences of this theorem. 
\begin{defn}
If $\{\rho_{i}\}_{i=1}^{\infty}$ is a sequence of finite dimensional
unitary representations of a discrete group $\Gamma$, say $\{\rho_{i}\}_{i=1}^{\infty}$
\emph{strongly converges to the regular representation} if for any
$z\in\C[\Gamma]$,
\[
\lim_{i\to\infty}\|\rho_{i}(z)\|=\|\lambda_{\Gamma}(z)\|,
\]
where $\lambda_{\Gamma}:\Gamma\to U(\ell^{2}(\Gamma))$ is the left
regular representation. The norms above are operator norms. We write
$\rho_{i}\tos\lambda_{\Gamma}$ in this event.\footnote{Some authors include weak convergence --- that is, pointwise convergence
of normalized traces to the canonical tracial state on the reduced
group $C^{*}$-algebra --- in the definition of strong convergence.
In the case of $\SL_{4}(\Z)$, these definitions agree.}
\end{defn}

If $\Gamma$ is a discrete group, we say that $\Gamma$ is purely
matricial field if there is a sequence $\{\rho_{i}\}_{i=1}^{\infty}$
of finite dimensional unitary representations of $\Gamma$ such that
$\rho_{i}\tos\lambda_{\Gamma}$. In this case, if $\U$ is any free
ultrafilter on $\N$, not only does the sequence $\{\rho_{i}:\Gamma\to U(N_{i})\}_{i=1}^{\infty}$
induce an embedding 
\[
C_{r}^{*}(\Gamma)\xrightarrow{\varphi}\prod_{\U}\Mat_{N_{i}\times N_{i}}
\]
into the $C^{*}$-ultraproduct of matrix algebras, in which case $C_{r}^{*}(\Gamma)$
is \emph{matricial field} in the sense of Blackadar and Kirchberg
\cite{BK}, but also, there is a `lifting' of the embedding restricted
to the group algebra of the form

\[   \begin{tikzcd}     \C[\Gamma] \arrow{r}{} \arrow[swap]{dr}{\varphi} & \ell^{\infty}(\prod_{i\in\N}\Mat_{N_{i}\times N_{i}}) \arrow{d}{} \\      & \prod_{\U}\Mat_{N_{i}\times N_{i}}   \end{tikzcd} \]

See \cite[Appendix A]{BrownOzawa} for background on ultraproducts.
Here $\text{\ensuremath{\ell^{\infty}}(\ensuremath{\prod_{i\in\N}\Mat_{N_{i}\times N_{i}}})}$
is the collection of bounded sequences with respect to the $C^{*}$-norms.
See Schafhauser \cite{schafhauser2023finite} for a current overview
of MF reduced $C^{*}$-algebras of groups.
\begin{cor}
\label{cor:-is-not}$\SL_{4}(\Z)$ is not purely matricial field.
\end{cor}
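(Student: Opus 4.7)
The plan is to argue by contradiction, playing Theorem \ref{thm:main} off against the non-amenability of $\SL_{2}(\Z)$. Suppose $\rho_{i}\tos\lambda_{\SL_{4}(\Z)}$. By Theorem \ref{thm:main}, each $\rho_{i}$ contains a unit vector $v_{i}$ fixed by $\rho_{i}(\SL_{2}(\Z))$. So for any element $z=\sum_{j}a_{j}g_{j}\in\C[\SL_{2}(\Z)]\subset\C[\SL_{4}(\Z)]$ with $a_{j}\ge 0$ and $\sum_{j}a_{j}=1$, we have $\rho_{i}(z)v_{i}=v_{i}$ and hence $\|\rho_{i}(z)\|\ge 1$ for every $i$.

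The next step is to exhibit such a $z$ with $\|\lambda_{\SL_{4}(\Z)}(z)\|<1$. Since $\SL_{2}(\Z)$ is non-amenable (it contains a free subgroup of rank two of finite index), Kesten's theorem supplies a finitely supported symmetric probability measure $z$ on $\SL_{2}(\Z)$ with $\|\lambda_{\SL_{2}(\Z)}(z)\|<1$. To pass from $\SL_{2}(\Z)$ to $\SL_{4}(\Z)$, I would use the standard fact that if $H\le G$ is a subgroup, then $\lambda_{G}|_{H}$ is unitarily equivalent to the direct sum of $[G:H]$ copies of $\lambda_{H}$, one for each right coset of $H$ in $G$ (here that index is countable). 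Since operator norms are preserved under taking direct sums, this gives $\|\lambda_{\SL_{4}(\Z)}(z)\|=\|\lambda_{\SL_{2}(\Z)}(z)\|<1$ for any $z$ supported on $\SL_{2}(\Z)$.

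Combining the two bounds, the strong convergence $\|\rho_{i}(z)\|\to\|\lambda_{\SL_{4}(\Z)}(z)\|$ forces $\|\lambda_{\SL_{4}(\Z)}(z)\|\ge 1$, contradicting the choice of $z$. This contradiction shows no such sequence $\rho_{i}$ exists, so $\SL_{4}(\Z)$ is not pure matrix field.

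I do not anticipate a serious obstacle at this stage: all the content is packed into Theorem \ref{thm:main}, and the remaining ingredients (Kesten's characterization of amenability and the decomposition of $\lambda_{G}|_{H}$ into copies of $\lambda_{H}$) are classical. The only mild subtlety is ensuring that the definition of strong convergence used here matches the one needed to relate $\SL_{4}(\Z)$ being pure matrix field to an embedding of $C^{*}_{r}(\SL_{4}(\Z))$ into an ultraproduct of matrix algebras, but the footnote in the definition already indicates that the two variants of strong convergence agree for $\SL_{4}(\Z)$, so this is not a genuine issue.
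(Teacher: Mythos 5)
Your proposal is correct and is essentially the paper's own argument: the paper takes the specific element $z=S+S^{-1}+T+T^{-1}$ (standard generators of $\SL_{2}(\Z)$), gets $\|\rho_{i}(z)\|=4$ from the invariant vector of Theorem \ref{thm:main}, and gets $\|\lambda_{\SL_{4}(\Z)}(z)\|=\|\lambda_{\SL_{2}(\Z)}(z)\|<4$ from the same coset decomposition of $\ell^{2}(\SL_{4}(\Z))$ over $\SL_{2}(\Z)$ together with Kesten's characterization of non-amenability. Your use of a general symmetric probability measure instead of this particular one is only a cosmetic difference.
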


This appears to be the first example of a finitely generated residually
finite group that is not purely matricial field. Groups that \emph{are}
known to be purely MF include free groups \cite{HaagerupThr}, limit
groups and surface groups \cite{louder2023strongly}, and right-angled
Artin groups, Coxeter groups, and hyperbolic three manifold groups
\cite{mageethomas}. 

It does not seem to be known whether $C^*_r(\SL_3(\Z))$ or $C^*_r(\SL_4(\Z))$ is MF in the sense of Blackadar and Kirchberg.

The property of a group being purely MF was historically relevant to the `$\Ext( C^*_r (F_2 ) )$ is not a group' problem (see \cite[Section 5.12]{voic_quasidiag}) and more  recently a strong form of purely MF for free groups, due to Bordenave and Collins \cite{BordenaveCollins}, was used to prove Buser's conjecture on the bottom of the spectrum of hyperbolic surfaces in two  different ways \cite{HideMagee, louder2023strongly}.
\begin{proof}[Proof of Corollary \ref{cor:-is-not}]
Let $S$ and $T$ denote standard generators of $\SL_{2}(\Z)$. Theorem
\ref{thm:main} implies that for any finite dimensional representation
$\rho$ of $\SL_{4}(\Z)$,
\[
\|\rho(S+S^{-1}+T+T^{-1})\|=4.
\]
On the other hand, as an $\SL_{2}(\Z)$-module, $\ell^{2}(\SL_{4}(\Z))$
breaks up into a direct sum of copies of $\ell^{2}(\SL_{2}(\Z))$.
Since $\SL_{2}(\Z)$ is not amenable, we have 
\[
\|\lambda_{\SL_{4}(\Z)}(S+S^{-1}+T+T^{-1})\|=\|\lambda_{\SL_{2}(\Z)}(S+S^{-1}+T+T^{-1})\|<4.\qedhere
\]
\end{proof}
Theorem \ref{thm:main} does not hold with `four' replaced by `three',
since for primes $p$ there are nontrivial irreducible representations of
$\SL_{3}(\Z/p\Z)$ without non-zero $\SL_{2}(\Z/p\Z)$-invariant vectors
(P. Deligne, private communication, see Example~\ref{ex:deligne}). Nevertheless it could still be
the case that $\SL_{3}(\Z)$ is not purely MF and we would be very
interested to know the answer of this question. It would perhaps clarify
the relation between property (T) and purely MF --- as far as we
know there is no direct relation. Property (T) says that it is difficult
to approach finite dimensional representations by arbitrary ones whereas
the group not being purely matricial field says that it is difficult
to approach the regular representation by finite-dimensional ones. 

\subsection*{Acknowledgments}

We thank Pierre Deligne for explaining to us the above mentioned fact
about representations of $\SL_{3}(\Z/p\Z)$. We thank Kevin Boucher, Yves de Cornulier and Olivier Dudas for comments and conversations about this project.

Funding: 

M. M. This material is based upon work supported by the National Science
Foundation under Grant No. DMS-1926686. This project has received
funding from the European Research Council (ERC) under the European
Union\textquoteright s Horizon 2020 research and innovation programme
(grant agreement No 949143).

M. S. Research supported by the Charles Simonyi Endowment at the Institute
for Advanced Study, and the ANR project ANCG Project-ANR-19-CE40-0002.

\section{Proofs of results}

It is an elementary consequence of work of Bass-Milnor-Serre on the
congruence subgroup property \cite{BassMilnorSerre} (e.g. \cite[\S 5]{Bekka}) that every finite dimensional unitary representation of $\SL_{4}(\Z)$ arises from a composition of homomorphisms
\[
\SL_{4}(\Z)\to\SL_{4}(\Z/N\Z)\xrightarrow{\phi}U(M)
\]
for some $N\in\N$. To prove Theorem \ref{thm:main} it therefore
suffices to prove the following.
\begin{prop}
\label{prop:levelN}For all $N\in\N$, every non-trivial finite dimensional representation
$\phi$ of $\SL_{4}(\Z/N\Z)$ has a non-zero $\SL_{2}(\Z/N\Z)$-invariant
vector.
\end{prop}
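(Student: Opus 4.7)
The plan is to prove Proposition \ref{prop:levelN} by reducing to prime power levels via the Chinese Remainder Theorem and then proceeding by induction on the exponent, using Clifford theory at each step. If $N = \prod p^{e_{p}}$, the isomorphism $\SL_{4}(\Z/N\Z) \cong \prod_{p} \SL_{4}(\Z/p^{e_{p}}\Z)$ respects the $\SL_{2}$-subgroup, and every irreducible complex representation of a direct product factors as an external tensor product. An $\SL_{2}(\Z/N\Z)$-invariant vector exists if and only if each tensor factor carries an invariant vector under the corresponding $\SL_{2}(\Z/p^{e_{p}}\Z)$, so it suffices to prove the proposition for $N = p^{k}$.

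The inductive step ($k \geq 2$) uses the abelian normal subgroup $K = \ker(G_{k} \to G_{k-1})$, where $G_{k} = \SL_{4}(\Z/p^{k}\Z)$. Via $I + p^{k-1}X \leftrightarrow X$, $K$ is canonically identified with the additive group of trace-zero matrices $\mathfrak{sl}_{4}(\mathbf{F}_{p})$, and the conjugation action of $G_{k}$ on $K$ factors through the adjoint action of $\SL_{4}(\mathbf{F}_{p})$. Given an irreducible $\phi$ of $G_{k}$: if $\phi|_{K}$ is trivial, then $\phi$ descends to $G_{k-1}$ and the inductive hypothesis applies, using the surjection $\SL_{2}(\Z/p^{k}\Z) \twoheadrightarrow \SL_{2}(\Z/p^{k-1}\Z)$; otherwise Clifford theory presents $\phi$ as $\Ind_{G_{\chi}}^{G_{k}} \tilde{\phi}$ for a nontrivial character $\chi$ of $K$. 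By Mackey's formula combined with Frobenius reciprocity, producing an $H_{k}$-invariant vector in $\phi$ (where $H_{k} = \SL_{2}(\Z/p^{k}\Z)$) reduces to finding a double coset $s \in H_{k} \backslash G_{k} / G_{\chi}$ such that the twisted representation $s \cdot \tilde{\phi}$ admits an $(H_{k} \cap s G_{\chi} s^{-1})$-invariant vector. A necessary preliminary condition is that $s\chi$ be trivial on $H_{k} \cap K$; via the trace pairing on $\mathfrak{sl}_{4}(\mathbf{F}_{p})$, this is equivalent to the orbit-theoretic condition that the $\SL_{4}(\mathbf{F}_{p})$-conjugacy class of the matrix $A$ representing $\chi$ contain a representative with scalar upper-left $2 \times 2$ block, which I would verify by case analysis on the Jordan canonical form of $A$.

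The base case $k = 1$ is the statement that every irreducible representation of $\SL_{4}(\mathbf{F}_{p})$ contains an $\SL_{2}(\mathbf{F}_{p})$-invariant vector. I would attack this through the Harish--Chandra philosophy of parabolic induction: every irreducible obtained by parabolic induction from the Levi $L = \SL_{2} \times \SL_{2}$ of the $(2,2)$-parabolic with inducing datum of the form $\mathbf{1} \otimes \pi$ manifestly carries $\SL_{2} \times \{I\}$-invariant vectors, and the symmetry interchanging the two factors of $L$ gives additional leverage. The main obstacle, which I expect to be the hardest step of the argument, is handling the cuspidal irreducibles of $\SL_{4}(\mathbf{F}_{p})$, where this soft argument fails: one must show that the restriction of any cuspidal irreducible to the upper-left $\SL_{2}(\mathbf{F}_{p})$ still contains the trivial representation. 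This likely requires either an explicit realization via Deligne--Lusztig theory or a more structural argument exploiting the outer automorphism of $\SL_{4}$ that swaps the two factors of the Levi, together with the fact that the sought-after weaker $\SL_{2}$-invariance is much coarser than the $P$-invariance ruled out by cuspidality.
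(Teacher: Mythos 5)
Your reduction to prime powers matches the paper's, but after that your outline has genuine gaps at exactly the points where the real content of the theorem lives. The decisive one is the base case $k=1$: you explicitly leave the cuspidal representations of $\SL_4(\mathbf{F}_p)$ unresolved, offering only the hope that Deligne--Lusztig theory or an outer automorphism ``likely'' suffices. Since the analogous statement is \emph{false} for $\SL_3(\Z/p\Z)$ inside $\SL_3$ (Deligne), any proof must exploit the specific $\SL_2\subset\SL_4$ geometry in a way your soft parabolic-induction argument does not; the cuspidal case is precisely where that must happen, so this is not a loose end but the heart of the problem. Even in the non-cuspidal case your argument is incomplete: showing that $\Ind_P^{G}(\mathbf{1}\otimes\pi)$ contains $\SL_2\times\{I\}$-invariant vectors (via the identity double coset) does not show that \emph{every irreducible constituent} does, and irreducibles induced from data $\pi_1\otimes\pi_2$ with both factors nontrivial are not covered at all. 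The inductive step has a parallel gap: after Mackey, triviality of $s\chi$ on $H_k\cap K$ is, as you say, only a necessary condition; you still must produce an $(H_k\cap sG_\chi s^{-1})$-invariant vector in $s\cdot\tilde\phi$, where $G_\chi$ and the extension $\tilde\phi$ depend on the adjoint orbit of $\chi$ and can be as complicated as the original problem. Neither the Jordan-form case analysis nor this second half is carried out.

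For comparison, the paper sidesteps both difficulties with a single argument valid uniformly for all $r\geq 1$ (so no separate base case and no classification of irreducibles of $\SL_4(\mathbf{F}_p)$). After checking that no elementary cyclic group $C_{ij}=\langle I+p^{r-1}\e_{ij}\rangle$ lies in $\ker\phi$, it restricts $\phi$ to the abelian unipotent radical $U_1$ of the $(3,1)$-parabolic, uses the transitive action of $\SL_3$ on nonzero characters mod $p$ to normalize the character to $(\xi_1,\xi_2,\xi_3)=(0,0,1)$, and then extracts a vector $v$ on which a Heisenberg-type group $H$ acts by a prescribed character. The key step is an exact stabilizer computation showing that the $G_2\cong\SL_2(\Z/p^r\Z)$-module generated by $v$ has dimension $|G_2|/|N|$ and hence \emph{equals} $\Ind_N^{G_2}\triv$ for the unipotent subgroup $N$, which contains the trivial representation by Frobenius reciprocity. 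If you want to salvage your approach, the cuspidal case of the base step is where you should concentrate; alternatively, replacing your Clifford/Mackey induction on $k$ with an argument along the paper's lines would close both gaps at once.
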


As before $\SL_{2}(\Z/N\Z)$ is the collection of matrices of the
form $\left(\begin{array}{cccc}
* & * & 0 & 0\\
* & * & 0 & 0\\
0 & 0 & 1 & 0\\
0 & 0 & 0 & 1
\end{array}\right)$ in $\SL_{4}(\Z/N\Z)$. The rest of the paper proves Proposition \ref{prop:levelN}.
We may assume that $\phi$ is irreducible and moreover that it is
\emph{new}, meaning that it does not factor through reduction modulo
$N'$
\[
\SL_{4}(\Z/N\Z)\to\SL_{4}(\Z/N'\Z)
\]
for any $N'<N$ dividing $N$. (Or else we replace $N$ by $N'$.)

\subsection{Reduction to prime powers}

Let 
\[
N=\prod_{\text{\ensuremath{p} prime}}p^{e(p)}
\]
be the prime factorization of $N$. By the Chinese remainder theorem
\[
\SL_{4}(\Z/N\Z)\cong\prod_{\text{\ensuremath{p} prime,}e(p)>0}\SL_{4}(\Z/p^{e(p)}\Z)
\]
and this induces a splitting 
\[
\phi\cong\bigotimes_{\text{\ensuremath{p} prime,}e(p)>0}\phi_{p}
\]
 where $\phi_{p}$ are irreducible representations of $\SL_{4}(\Z/p^{e(p)}\Z)$.
The assumption that $\phi$ is new implies that each $\phi_{p}$ is
new. If we can prove all the $\phi_{p}$ have non-zero $\SL_{2}(\Z/p^{e(p)}\Z)$-invariant
vectors $v_{p}$, then
\[
v=\bigotimes_{\text{\ensuremath{p} prime,}e(p)>0}v_{p}
\]
 will be the required non-zero invariant vector for $\SL_{2}(\Z/N\Z)\cong\prod_{\text{\ensuremath{p} prime,}e(p)>0}\SL_{2}(\Z/p^{e(p)}\Z)$
--- the inclusion of $\SL_{2}$ in $\SL_{4}$ that we use commutes
with our applications of the Chinese remainder theorem.

The strategy of the proof is the following:
\begin{description}
\item [{Step~1:}] We prove the representation is non-trivial when restricted
to all elementary cyclic subgroups of level $p^{r-1}$.
\item [{Step~2:}] We use Step 1 to prove that on restriction to a particular
copy of the Heisenberg group modulo $p^{r}$, we find a particular
type of character, namely, the one described in (\ref{eq:v-action}).
\item [{Step~3:}] We take a non-zero vector in the isotypic subspace of
the character of the Heisenberg group found in Step 2. By averaging
this vector over a copy of $\SL_{2}(\Z/p^{r}\Z$) we find a non-zero
$\SL_{2}(\Z/p^{r}\Z)$-invariant vector. Here, the form of the Heisenberg
group character we found in the previous step is important to make
sure that this average is non-zero.
\end{description}

\subsection{Prime powers: step 1}

It therefore now suffices to prove Proposition \ref{prop:levelN}
when $N=p^{r}$, $r\geq1$. Let $\phi$ denote the irreducible representation.
For $1\leq i\neq j\leq4$ let $\e_{ij}$ denote the matrix with one
in the $i,j$ entry and zeros elsewhere. The first step is to find
a non-trivial subrepresentation of some 
\[
C_{ij}\eqdf\langle I+p^{r-1}\e_{ij}\rangle.
\]
As $C_{ij}$ is abelian, by further passing to a subrepresentation,
we may assume the non-trivial subrepresentation is irreducible and
hence a character.

If $r=1$ $\SL_{4}(\Z/p\Z)$ is generated by such cyclic subgroups.
So suppose for this step that $r>1$. 

We could proceed by using a result of Bass--Milnor--Serre \cite[Cor. 4.3.b]{BassMilnorSerre}
--- stating that the principal congruence subgroup of level $p^{r}$
in $\SL_{4}(\Z)$ is normally generated by elementary matrices. For
completeness, below we give a simple self-contained proof of what
we need.

Let $G(p^{r-1})$ denote the kernel of reduction mod $p^{r-1}$ on
$\SL_{4}(\Z/p^{r}\Z$). Since we assume $\phi$ is new, we know $G(p^{r-1})$
is not contained in the kernel of $\phi$. Let $\Mat_{4\times4}^{0}(\Z/p\Z)$
denote the four by four matrices with entries in $\Z/p\Z$ and zero
trace. The map 
\begin{equation}
A\in\Mat_{4\times4}^{0}(\Z/p\Z)\mapsto I+p^{r-1}A\in G(p^{r-1})\label{eq:add-isomorphism}
\end{equation}
 is easily seen to be an isomorphism of groups, where the group law
on $\Mat_{4\times4}^{0}(\Z/p\Z)$ is addition. 

We want to first show that some $C_{ij}$ acts non-trivially in the
representation.

Suppose for a contradiction that we do not find a non-trivial irreducible
subrepresentation of some $C_{ij}$, so that all $1+p^{r-1}B$ with
$B$ zero on the diagonal are in $\ker(\phi)$. Using (\ref{eq:add-isomorphism}),
this assumption implies that $\phi$ restricted to $G(p^{r-1})$ is
equivalent to a non-trivial representation of 
\[
\Mat_{4\times4}^{0}(\Z/p\Z)/\{\text{elements of \ensuremath{\Mat_{4\times4}^{0}(\Z/p\Z)} that are zero on the diagonal\}.}
\]
But this is spanned by equivalence classes of diagonal elements. Thus there is necessarily a diagonal matrix $A$ such that $I+p^{r-1}A$ is
not in the kernel of $\phi$, without loss of generality (choosing
a basis for the diagonal trace zero matrices) $A=\left(\begin{array}{cccc}
1 & 0 & 0 & 0\\
0 & -1 & 0 & 0\\
0 & 0 & 0 & 0\\
0 & 0 & 0 & 0
\end{array}\right).$

We calculate 
\begin{align*}
 & \left(\begin{array}{cccc}
1 & 0 & 0 & 0\\
1 & 1 & 0 & 0\\
0 & 0 & 1 & 0\\
0 & 0 & 0 & 1
\end{array}\right)\left(I+p^{r-1}\left(\begin{array}{cccc}
0 & 1 & 0 & 0\\
0 & 0 & 0 & 0\\
0 & 0 & 0 & 0\\
0 & 0 & 0 & 0
\end{array}\right)\right)\left(\begin{array}{cccc}
1 & 0 & 0 & 0\\
-1 & 1 & 0 & 0\\
0 & 0 & 1 & 0\\
0 & 0 & 0 & 1
\end{array}\right)\\
 & =I+p^{r-1}\left(\begin{array}{cccc}
-1 & 1 & 0 & 0\\
-1 & 1 & 0 & 0\\
0 & 0 & 0 & 0\\
0 & 0 & 0 & 0
\end{array}\right)\in\ker(\phi).
\end{align*}
Then also 
\begin{align*}
 & \left(I+p^{r-1}\left(\begin{array}{cccc}
-1 & 1 & 0 & 0\\
-1 & 1 & 0 & 0\\
0 & 0 & 0 & 0\\
0 & 0 & 0 & 0
\end{array}\right)\right)\left(I+p^{r-1}\left(\begin{array}{cccc}
0 & -1 & 0 & 0\\
1 & 0 & 0 & 0\\
0 & 0 & 0 & 0\\
0 & 0 & 0 & 0
\end{array}\right)\right)\\
 & =I+p^{r-1}\left(\begin{array}{cccc}
-1 & 0 & 0 & 0\\
0 & 1 & 0 & 0\\
0 & 0 & 0 & 0\\
0 & 0 & 0 & 0
\end{array}\right)\in\ker(\phi),
\end{align*}
a contradiction. The conclusion of this step is no matter $r\geq1$,
we find $i\neq j$ such that $C_{ij}\notin\ker(\phi)$. But in fact,
since all $C_{ij}$ are conjugate in $\SL_{4}(\Z/p\Z)$, this means
that:

\emph{No $C_{ij}$ is contained in the kernel of $\phi$.}

\subsection{Prime powers: step 2 }

Let $U_{1}$ denote the group 
\[
U_{1}\eqdf\left\{ \Upsilon(u_{1},u_{2},u_{3})\eqdf\left(\begin{array}{cccc}
1 & 0 & 0 & u_{1}\\
0 & 1 & 0 & u_{2}\\
0 & 0 & 1 & u_{3}\\
0 & 0 & 0 & 1
\end{array}\right)\right\} \leq\SL_{4}(\Z/p^{r}\Z).
\]
The group $U_{1}$ is isomorphic to $(\Z/p^{r}\Z,+)^{3}$ so the
restriction of $\phi$ to $U_{1}$ breaks into a direct sum of
one-dimensional subspaces where $U_{1}$ acts by a character. Moreover, $\SL_{3}(\Z/p^r \Z)$ normalizes $U_{1}$ so it acts on the characters of $U_1$ appearing like this by $g \chi = \chi( g^{-1} \cdot g)$. This action is called the dual action. Every such character is of the form
\begin{equation}
\chi\colon\Upsilon(u_{1},u_{2},u_{3})\mapsto\exp\left(2\pi i\frac{(\xi_{1}u_{1}+\xi_{2}u_{2}+\xi_{3}u_{3})}{p^{r}}\right)\label{eq:char-form}
\end{equation}
for $(\xi_{1},\xi_{2},\xi_{3}) \in (\Z/p^r\Z)^3$ and the dual action corresponds to $(\xi_{1},\xi_{2},\xi_{3})\mapsto (\xi_{1},\xi_{2},\xi_{3}) g^{-1}$. If $(\xi_{1},\xi_{2},\xi_{3})\equiv0\bmod p$ then all $\Upsilon(u_{1},u_{2},u_{3})$
with $p^{r-1}|u_{1},u_{2},u_{3}$ are in the kernel of the character.
If all obtained characters satisfy this condition, then $\phi$ restricted
to $U_{1}$ has $U_{1}\cap G(p^{r-1})$ in its kernel. But by Step
1, $C_{14}$ is not contained in the kernel of $\phi$. Hence in the
restriction of $\phi$ to $U_{1}$ there must be a character of the
form (\ref{eq:char-form}) where $(\xi_{1},\xi_{2},\xi_{3})\not\equiv0\bmod p$.
Since $\SL_{3}(\Z/p^r\Z)$ acts transitively on the vectors in $(\Z/p^r\Z)^{3}$ satisfying  $(\xi_{1},\xi_{2},\xi_{3})\not\equiv0\bmod p$, by considering the dual action we may assume
\[
(\xi_{1},\xi_{2},\xi_{3})=(0,0,1).
\]
Let $V_{\chi}$ be the $\chi$-isotypic space for the restriction
of $\phi$ to $U_{1}$, where $\chi$ and $\xi$ are as above.

The group
\[
G_{1}\eqdf\left\{ \left(\begin{array}{cccc}
* & * & * & 0\\
* & * & * & 0\\
0 & 0 & 1 & 0\\
0 & 0 & 0 & 1
\end{array}\right)\right\} \leq\SL_{4}(\Z/p^{r}\Z)
\]
normalizes $U_{1}$ and fixes $\chi$ under the dual action. Hence
$V_{\chi}$ is an invariant subspace for $G_{1}$. Now restrict $V_{\chi}$
to the group 
\[
U_{2}\eqdf\left\{ [v_{1};v_{2}]\eqdf\left(\begin{array}{cccc}
1 & 0 & v_{1} & 0\\
0 & 1 & v_{2} & 0\\
0 & 0 & 1 & 0\\
0 & 0 & 0 & 1
\end{array}\right)\right\} \leq G_{1}
\]
and we will decompose this into characters $\theta$ of $U_{2}$;
let $V_{\chi,\theta}$ denote the corresponding isotypic subspace. 

Consider now the group

\[
H\eqdf\left\{ [x;y;z]\eqdf\left(\begin{array}{cccc}
1 & 0 & 0 & 0\\
0 & 1 & x & z\\
0 & 0 & 1 & y\\
0 & 0 & 0 & 1
\end{array}\right)\right\} \leq\SL_{4}(\Z/p^{r}\Z).
\]
As we already mentioned, $G_{1}$ preserves $V_{\chi}$. Obviously
$U_{1}$ fixes all its characters under the dual action induced by
conjugation, hence all $\left(\begin{array}{cccc}
* & * & * & *\\
* & * & * & *\\
0 & 0 & 1 & *\\
0 & 0 & 0 & 1
\end{array}\right)$ fix our chosen $\chi$ under the dual action, or in other words,
leave $V_{\chi}$ invariant. Hence the space $V_{\chi}$ is invariant
by $H$. 

We have $[0;0;z]=\Upsilon(0,z,0)\in U_{1}$ and for $v\in V_{\chi}$
\[
\Upsilon(0,z,0)v=\exp\left(2\pi i\frac{(0\cdot0+0\cdot z+1\cdot0)}{p^{r}}\right)v=v.
\]
Hence the action of $H$ on $V_{\chi}$ has kernel that contains the
subgroup with $x=y=0$, which is isomorphic to $\Z/p^{r}\Z$. Hence
the action of $H$ on $V_{\chi}$ factors through an action of 
\[
H/(\Z/p^{r}\Z)\cong(\Z/p^{r}\Z)^{2}.
\]
We want to find a particular character of $H$ and to do so we split
into the following cases.

\textbf{Case 1. $V_{\chi}$ restricted to $U_{2}$ is trivial.} Then
obviously $H$ acts on all of $V_{\chi}$ by 
\begin{equation}
[x;y;z]\mapsto\exp\left(2\pi i\frac{y}{p^{r}}\right).\label{eq:desired-character}
\end{equation}

\textbf{Case 2. }Otherwise, we find a character $\theta$ in $V_{\chi}$
of the form
\[
\theta:[v_{1};v_{2}]\mapsto\exp\left(2\pi i\frac{(\zeta_{1}v_{1}+\zeta_{2}v_{2})}{p^{r}}\right)
\]
with $(\zeta_{1},\zeta_{2})\not\equiv(0,0)\bmod p^{r}$. Write $(\zeta_{1},\zeta_{2})=p^{R}(z_{1},z_{2})$
with $(z_{1},z_{2})\neq(0,0)\bmod p$. By conjugation in $\SL_{2}(\Z/p^{r}\Z)\leq G_{1}$
--- which normalizes $U_{2}$ --- we can find a new $\theta'$ with
corresponding $z_{1}=1,z_{2}=0$ so that
\[
\theta':[v_{1};v_{2}]\mapsto\exp\left(2\pi i\frac{v_{1}}{p^{r-R}}\right).
\]
In particular, on $V_{\chi,\theta'}$ $H$ acts by the character (\ref{eq:desired-character}). 

\emph{To summarize, in any case, there exists a non-zero vector $v\in V_{\chi}$
such that 
\begin{equation}
\phi([x;y;z])v=\exp\left(2\pi i\frac{y}{p^{r}}\right)v.\label{eq:v-action}
\end{equation}
}

\subsection{Prime powers: step 3}

Now let 
\[
G_{2}\eqdf\left\{ \left(\begin{array}{cccc}
1 & 0 & 0 & 0\\
0 & a & b & 0\\
0 & c & d & 0\\
0 & 0 & 0 & 1
\end{array}\right)\right\} \leq\SL_{4}(\Z/p^{r}\Z).
\]
From (\ref{eq:v-action}), $v$ is fixed by the subgroup
\[
N\eqdf\left\{ \left(\begin{array}{cccc}
1 & 0 & 0 & 0\\
0 & 1 & n & 0\\
0 & 0 & 1 & 0\\
0 & 0 & 0 & 1
\end{array}\right)\right\} \leq G_{2}.
\]
This implies that if $W$ denotes the representation of $G_{2}\cong\SL_{2}(\Z/p^{r}\Z)$
generated by $v$, that $W$ is a quotient of the induced representation
\[
\Ind_{N}^{G_{2}}\triv=\C[G_{2}]\otimes_{N}\C.
\]

Suppose $g\in G_{2}$, with $a,b,c,d$ as above in $\Z/p^{r}\Z$.
We have 
\begin{align*}
\phi([0;y;z])\phi(g^{-1})v & =\phi(g^{-1})\phi(g[0;y;z]g^{-1})v\\
 & =\phi(g^{-1})\phi([0;cz+dy;az+by])v\\
 & =\phi(g^{-1})\exp\text{\ensuremath{\left(2\pi i\frac{(dy+cz)}{p^{r}}\right)}}v.
\end{align*}
This means, in this co-adjoint action of $G_2$ on characters of the group $\langle[0;y;z]\rangle$,
$N$ is precisely the stabilizer of the character of $v$, and hence
\[
\dim W=|G_{2}|/|N|=\dim\Ind_{N}^{G_{2}}\triv,
\]
so in fact, $W\cong\Ind_{N}^{G_{2}}\triv$ as a $G_{2}$ representation.
By Frobenius reciprocity, this contains the trivial representation
of $G_{2}$. Finally, $G_{2}$ and the upper left copy of $\SL_{2}(\Z/p^{r}\Z)$
are conjugate in $\SL_{4}(\Z/p^{r}\Z)$. This concludes the proof. 
\subsection{Representations of $\SL_3(\Z/p\Z)$}
The character tables of $\SL_3(\F)$ for finite fields $\F$ have been computed in \cite{SimpsonSutherland}. In particular, if we view $\SL_{2}(\F)$ as the subgroup of $\SL_{3}(\F)$ consisting
of matrices of the form $\left(\begin{array}{ccc}
* & * & 0 \\
* & * & 0 \\
0 & 0 & 1 \\
                               \end{array}\right)$, we obtain the following example, explained to us by Deligne:
\begin{example}\label{ex:deligne} For every prime power $q$, $\SL_3(\F_q)$ has an irreducible representation such that, for every $g \in \SL_2(\F_q)$,
  \[ \Tr(\pi(g)) = \begin{cases} (q-1)(q^2-1)& \textrm{if }g=1\\1-q &\textrm{if }(g-1)^2 = 0 \neq g-1\\0& \textrm{if }(g-1)^2 \neq 0.\end{cases}\]
This representation does not have a non-zero $\SL_2(\F_q)$-invariant vector.
\end{example}
The representations are any of those denoted $\chi_{r^2s}(u)$ in \cite[Table 1b]{SimpsonSutherland} (that are associated with tori of split rank $0$ in the Deligne-Lusztig theory \cite{Humphreys}). The properties of $\Tr(\pi(g))$ follow readily from this table and the description of the conjugacy classes in $\SL_2(\F_q)$ (e.g. \cite[Section 1.3]{bonnafe}).

Such a representation does not have non-zero $\SL_2(\F_q)$-invariant vectors because, using that there are exactly $q^2-1$ unipotent matrices in $\SL_2(\F_q)\setminus\{1\}$ \cite[Section 1.3]{bonnafe}, we can compute that the trace of the projection on the $\SL_2(\F_q)$-invariant vectors is $0$:
  \[ \Tr\Big(\sum_{g \in \SL_2(\F_q)} \pi(g)\Big) = 1 \cdot (q-1)(q^2-1) + (q^2-1)\cdot(1-q) = 0.\]

\noindent \bibliographystyle{amsalpha}
\bibliography{sl4}

\noindent Michael Magee, \\
Department of Mathematical Sciences, Durham University, Lower Mountjoy,
DH1 3LE Durham, UK\\
IAS Princeton, School of Mathematics, 1 Einstein Drive, Princeton
08540, USA\\
\texttt{michael.r.magee@durham.ac.uk}~\\

\noindent Mikael de la Salle, \\
Institut Camille Jordan, CNRS, Universit\'{e} Lyon 1, France\\
IAS Princeton, School of Mathematics, 1 Einstein Drive, Princeton
08540, USA\\
\texttt{delasalle@math.univ-lyon1.fr}\\

\end{document}